\newcommand{\mz}{\mathbb{Z}}
\newcommand{\mc}{\mathbb{C}}
\newcommand{\mr}{\mathbb{R}}
\newcommand{\mrz}{\mathbb{R}/\mathbb{Z}}
\newcommand{\hhat}{\hat{H}}
\newcommand{\mcA}{\mathcal{A}}
\newcommand{\man}{\textbf{Man}}
\newcommand{\ab}{\textbf{Ab}}
\newtheorem{proposition}{Proposition}
\newtheorem{corollary}[proposition]{Corollary}
\newtheorem{definition}[proposition]{Definition}
\newtheorem{question}{Question}
\title{Uniqueness of differential characters and differential K-theory via homological algebra}
\author{Ishan Mata}
\date{}
\begin{document}
\maketitle

\begin{abstract}
In \textit{Proc Math Sci 129, 70(219)}, Rakesh Pawar considers and solves a certain diagram extension problem. In this note, we observe that the existence and uniqueness of differential characters (defined as objects which fit into a certain hexagon diagram) follow directly from Rakesh Pawar's results. This provides an alternate proof of a weaker version of J. Simons and D. Sullivan's results (\textit{Journal of Topology, 2008, 1:45–56}). Further, this approach directly shows that the hexagon diagram uniquely determines the differential K-theory groups upto an isomorphism.
\end{abstract}

\section{Introduction}
In the seminal work \cite{cs2}, J. Cheeger and J. Simons introduced and developed the theory of differential characters. For a fixed smooth manifold $M$, they defined the abelian group of differential characters $\hhat^k(M;\mrz)$ as{\footnote{Our convention of degrees differs from the one in original paper \cite{cs2} where this group is called $\hhat^{k-1}(M;\mrz)$.}} \begin{equation}                                                                                                                                                                                                                            \hhat^k(M;\mrz)=\{ f: Hom(Z_{k-1}(M),\mrz)| f \circ \delta \in \Omega^k\}.                                                                                                                                                                                                                                    \end{equation}
Cheeger and Simons show that there are short exact sequences :
\begin{equation}
 0 \to H^{k-1}(M;\mrz) \xrightarrow{j} \hhat^k(M;\mrz) \xrightarrow{curv} \Omega_0^k(M) \to 0,
\end{equation} and 
\begin{equation}
 0 \to \frac{\Omega^{k-1}(M)}{\Omega_0^{k-1}(M)} \xrightarrow{i} \hhat^k(M;\mrz) \xrightarrow{c} H^k(M;\mz) \to 0.
\end{equation}
Here $\Omega^k_0(M)$ stands for closed degree $k$ forms with integral periods. They further showed that the composition $H^{k-1}(M;\mrz) \xrightarrow{j} \hhat^k(M;\mrz) \xrightarrow{c} H^k(M;\mz) $ is $-B$ where $B$ is the Bockstein map; and the composition $\frac{\Omega^{k-1}(M)}{\Omega_0^{k-1}(M)} \xrightarrow{i} \hhat^k(M;\mrz) \xrightarrow{curv} \Omega_0^k(M)$ is the exterior derivative $d$. 

There are various other constructions of differential refinements of ordinary singular cohomology in the literature, see for example \cite{gajer,brylinski,lawsona,lawsonb,bks}. It is natural to ask whether these constructions are equivalent. In \cite{ssdc}, Simons and Sullivan provide an axiomatic characterization of differential cohomology. 
They show that the following hexagon diagram with exact diagonals \cite{ssdc} :  
\begin{center}
\setlength{\unitlength}{0.5cm}
\begin{picture}(24,16)
\put(5,0.5){$0$}
\put(20.5,0.5){$0$}
\put(5.5,1){\vector(1,1){1.8}}
\put(18.5,3){\vector(1,-1){1.8}}
\put(8,4){$\frac{\Omega^{k-1}(M)}{\Omega^{k-1}_{0}(M)}$}
\put(12.5,4){\vector(1,0){3.5}}
\put(17,4){$\Omega^{k}_{0}(M)$}
\put(13.5,4.5){\small{$d$}}
\put(2.5,6){\vector(1,1){1.8}}
\put(6.5,7){\vector(1,-1){1.8}}
\put(7.5,7){\small{$\beta$}}
\put(10.5,7){\small{$i$}}
\put(10,5.5){\vector(1,1){1.8}}
\put(14.5,7){\vector(1,-1){1.8}}
\put(15.0,7){\small{$curv$}}
\put(18.5,7){\small{$s$}}
\put(19,5){\vector(1,1){1.8}}
\put(22.5,7.5){\vector(1,-1){1.8}}
\put(4,8){$H^{k-1}(M;\mr)$}
\put(11,8){$\hhat^{k}(M;\mrz)$}
\put(20,8){$H^{k}(M;\mr)$}
\put(2.5,11){\vector(1,-1){1.8}}
\put(6.5,11){\small{$\alpha$}}
\put(6,9.5){\vector(1,1){1.8}}
\put(10.5,11){\vector(1,-1){1.8}}
\put(11.5,10.5){\small{$j$}}
\put(14,10.5){\small{$c$}}
\put(14.0,9.5){\vector(1,1){1.5}}
\put(18.5,11){\vector(1,-1){1.5}}
\put(19.5,10.5){\small{$r$}}
\put(22.5,9.5){\vector(1,1){1.5}}
\put(6,12){$H^{k-1}(M;\mrz)$}
\put(12.2,12.2){\vector(1,0){3.2}}
\put(16,12){$H^{k}(M;\mz)$}
\put(13.5,12.5){\small{$-B$}}
\put(5.5,15){\vector(1,-1){1.8}}
\put(18,13.5){\vector(1,1){1.8}}
\put(4.5,15.5){$0$}
\put(20,15.5){$0$}
\end{picture}
Differential cohomology hexagon diagram
\end{center}
uniquely characterizes the ordinary differential cohomology functor upto a natural equivalence (for a precise statement, see proposition \ref{ssdcth}).
The long exact sequence of the upper arrows is the long exact sequence in cohomology corresponding to the short exact sequence $ 0 \to \mz \to \mr \to \mrz \to 0 $. The map $\beta$ and $s$ are induced by the the de-Rham morphism. 
 \\
This axiomatization of differential cohomology is useful since it establishes that different constructions of differential refinements of ordinary singular cohomology are essentially equivalent.
For example, since the Deligne cohomology functor \cite{brylinski} - defined as hypercohomology of a certain complex- fits in the hexagon diagram \cite{gajer}, it follows that the Cheeger-Simons differential character functor is naturally equivalent to the Deligne cohomology functor via an equivalence compatible with the diagonal morphisms in the respective hexagon diagrams.

Just as ordinary differential cohomology admits a differential refinement, so do other generalised cohomology theories. Given a generalised cohomology theory represented by a spectrum, Hopkins and Singer gave a  prescription \cite{hopkinssinger} for constructing its differential refinement. In particular one can construct a differential version of K-theory using their prescription. In \cite{ssdk}, Simons and Sullivan construct another model of differential K-theory for compact manifolds in terms of structured vector bundles. They show that their model of differential K-theory fits in the following hexagon diagram with exact diagonals : 
\begin{center}
\setlength{\unitlength}{0.5cm}
\begin{picture}(24,16)
\put(4.5,0.5){$0$}
\put(20.5,0.5){$0$}
\put(5.5,1){\vector(1,1){1.8}}
\put(18.5,3){\vector(1,-1){1.8}}
\put(8,4){$\frac{\Omega^{odd}(M)}{\Omega_{GL}(M)}$}
\put(12,4.2){\vector(1,0){3.5}}
\put(17,4){$\Omega_{BGL}(M)$}
\put(13.5,4.5){\small{$d$}}
\put(2.5,6){\vector(1,1){1.8}}
\put(6.5,7){\vector(1,-1){1.8}}
\put(7.2,6.6){\small{$\zeta$}}
\put(10.5,7){\small{$i$}}
\put(10.5,6){\vector(1,1){1.8}}
\put(14.5,7){\vector(1,-1){1.8}}
\put(15.0,7){\small{$curv$}}
\put(18.5,7){\small{$s$}}
\put(18.5,6){\vector(1,1){1.8}}
\put(22.5,7){\vector(1,-1){1.8}}
\put(4,8){$H^{odd}(M;\mc)$}
\put(12,8){$\hat{K}(M)$}
\put(20,8){$H^{even}(M;\mc)$}
\put(2.5,11){\vector(1,-1){1.8}}
\put(6.5,10.5){\small{$\chi$}}
\put(6,9.5){\vector(1,1){1.8}}
\put(10.5,11){\vector(1,-1){1.8}}
\put(11.5,10.5){\small{$j$}}
\put(14,10){\small{$\delta$}}
\put(14.0,9.5){\vector(1,1){1.8}}
\put(18.5,11){\vector(1,-1){1.8}}
\put(19.5,10.5){\small{$r$}}
\put(22.5,10){\vector(1,1){1.8}}
\put(7,12){$K(\mc/\mz)(M)$}
\put(12.3,12.2){\vector(1,0){3}}
\put(16,12){$K(M)$}
\put(13,12.5){\small{$-B$}}
\put(5.5,15){\vector(1,-1){1.8}}
\put(18,13){\vector(1,1){1.8}}
\put(5,15){$0$}
\put(20,15){$0$}
\end{picture}
\\Differential K-theory hexagon diagram
\end{center}
They ask whether, like in the case of differential characters, 

\begin{question}\label{ssquestion}
Does the above hexagon diagram determine the differential K-theory functor (from the category of compact manifolds to the category of abelian groups) upto a natural equivalence compatible with the respective diagonal morphisms ?
\end{question}
In \cite{rp}, Rakesh Pawar finds necessary and sufficient conditions for the diagram 
\begin{center}
\begin{tikzcd}\label{gapdiagram}
            & 0 \arrow[d]           &             & 0 \arrow[d]           &   \\
0 \arrow[r] & P \arrow[r] \arrow[d] & E \arrow[r] & R \arrow[r] \arrow[d] & 0 \\
            & H \arrow[d]           &             & F \arrow[d]           &   \\
0 \arrow[r] & S \arrow[d] \arrow[r] & G \arrow[r] & Q \arrow[r] \arrow[d] & 0 \\
            & 0                     &             & 0                     &  
\end{tikzcd}
\\ Diagram 1
\end{center}

with short exact rows and columns to extend to \\ 
\begin{center}
\begin{tikzcd}
                    & 0 \arrow[d]                         & 0 \arrow[d, dashed]                             & 0 \arrow[d]                   &   \\
0 \arrow[r]         & P \arrow[r, "\nu"] \arrow[d, "\mu"] & E \arrow[r] \arrow[d, "j", dashed]              & R \arrow[r] \arrow[d]         & 0 \\
0 \arrow[r, dashed] & H \arrow[d] \arrow[r, "i", dashed]  & X \arrow[r, "m", dashed] \arrow[d, "n", dashed] & F \arrow[d] \arrow[r, dashed] & 0 \\
0 \arrow[r]         & S \arrow[d] \arrow[r]               & G \arrow[r] \arrow[d, dashed]                   & Q \arrow[r] \arrow[d]         & 0 \\
                    & 0                                   & 0                                               & 0                             &  
\end{tikzcd}\\Diagram 2
\end{center}
with short exact rows and columns. He further gives conditions for uniqueness of such extensions.

In this modest note, we wish to highlight that the results of Rakesh Pawar directly imply (see proposition \ref{directdc}) the existence and uniqueness of differential character groups $\hhat^k(M)$(defined as objects which fit into the hexagon diagram with exact diagonals). Uniqueness of the functor $\hhat^k(-;\mrz)$ is a stronger result, for which we do not have a complete proof. However, we state a condition \ref{dcconditions} which implies the full Simons-Sullivan result.

Similarly we note in proposition \ref{directdk} that for any compact manifold $M$, the differential K-theory groups are uniquely determined upto an isomorphism compatible with the respective diagonal maps, thereby partially answering a question \ref{ssquestion} of Simons and Sullivan. We give necessary and sufficient conditions \ref{dkconditions} for an affirmative answer to the Simons-Sullivan question in full generality.

This note is organised as follows. In section \ref{rpsummary} we summarize the theorems of the article \cite{rp} that we need for our purposes. In \ref{mainsection} we state the uniqueness results for differential characters and differential K-theory. These uniqueness results are a direct corollary of Rakesh Pawar's purely homological algebraic results involving no topology or geometry. Interestingly, for this reason, this approach may potentially admit an adaptation for axiomatising other generalised differential cohomology theories.

Axiomatic characterization of generalised differential cohomology theories with fiber integration, and their uniqueness has been discussed in \cite{bunke}.
\section{Statement of Rakesh Pawar's results}\label{rpsummary}

In this section, we summarize the results of \cite{rp} that we need for present purposes. Let us begin by recalling some standard preliminary definitions and results from homological algebra (see, for example, \cite{weibel,rotman}). If $\mcA$ is an abelian category with enough projectives, then for any two objects $P,Q$ in $\mcA$, one can consider the groups $Ext^n(Q,P)$ as the derived functor of the $Hom$ functor.

Alternatively, one can consider the group of Yoneda extensions of $P$ by $Q$ as follows. Consider the set of long exact sequences $\zeta : 0 \to P \to X_n \to \cdots \to X_1 \to Q \to 0$. If $\zeta' : 0 \to P \to X'_n \to \cdots \to X'_1 \to Q \to 0$ is another such extension, a map $f: \zeta \to \zeta'$ is a collection of maps $f_i : X_i \to X'_i$  such that the diagram\\ 
\begin{tikzcd}
0 \arrow[r] & P \arrow[r] \arrow[d, phantom] \arrow[d, "id_P"] & X_n \arrow[d, "f_n"] \arrow[r] & ... \arrow[r] & X_1 \arrow[r] \arrow[d, "f_1"] & Q \arrow[r] \arrow[d, "id_Q"] & 0 \\
0 \arrow[r] & P \arrow[r]                                      & X'_n \arrow[r]                 & ... \arrow[r] & X_1 \arrow[r]                  & Q \arrow[r]                   & 0
\end{tikzcd} \\commutes. Define an equivalence relation $\zeta \sim \eta \iff \exists$ a finite zigzag chain $\zeta \to \alpha_1 \leftarrow \alpha_2 \to \alpha_3 \leftarrow \cdots \to \eta$. Quotient of the set of extensions considered above by this equivalence relation gives us the set of Yoneda extensions $Ext^n_{Yoneda}(Q,P)$. On this set, define addition as $\zeta+\zeta' = [0 \to P \to Y_{n} \to X'_{n-1} \oplus X_{n-1} \to \cdots \to X'_2\oplus X_2 \to Y_1 \to Q \to 0]$. Here $Y_1$ is the pullback $X_1 \times_Q X'_1$, and $Y_n$ is the quotient by a skew diagonal copy of $P$, of the pushout of $P \to X_n$ and $P \to X'_n$. The set $Ext^n_{Yoneda}(Q,P)$ becomes an abelian group under this operation. If the category $\mcA$ has enough projectives, then $Ext^n_{Yoneda}(Q,P)$ is isomorphic to $Ext^n(Q,P)$ (see, for example, section 3.4 of \cite{weibel}). Throughout this article we shall assume that the category $\mathcal{A}$ has enough projectives. 
\begin{proposition}\label{rpthme} (Rakesh Pawar) : Let $\mathcal{A}$ be an abelian category. Let Diagram 1 have exact rows and columns of objects of $\mathcal{A}$. The diagram 1 extends to diagram 2 with exact rows and columns if and only if the Baer sum of $[E]\cup[F]$ and $[H]\cup[G]$ is zero in $Ext^2(Q,P)$.
\end{proposition}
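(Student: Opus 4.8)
The plan is to reduce this two-dimensional filling problem to a one-dimensional extension problem and then read off the obstruction from a single $Ext$ long exact sequence; this handles both implications uniformly and produces the Baer sum, with the correct sign, automatically. The first step is to record the two classes in $Ext^2(Q,P)$ as honest $4$-term exact sequences. Splicing the top row $0\to P\to E\to R\to 0$ with the right column $0\to R\to F\to Q\to 0$ along $R$ represents $[E]\cup[F]$ by $0\to P\to E\to F\to Q\to 0$, with middle map $E\twoheadrightarrow R\hookrightarrow F$; splicing the left column with the bottom row along $S$ represents $[H]\cup[G]$ by $0\to P\to H\to G\to Q\to 0$, with middle map $H\twoheadrightarrow S\hookrightarrow G$. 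The assertion is that the sum of these two elements vanishes exactly when the centre can be filled.

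Next I would introduce the ``partial centre'' $D$, defined as the pushout of $E\xleftarrow{\nu}P\xrightarrow{\mu}H$ taken along the antidiagonal, that is $D=(E\oplus H)/\{(\nu p,-\mu p):p\in P\}$. The single minus sign here is the crux of the whole argument; it is forced by the commutativity $j\nu=i\mu$ of the top-left square of Diagram $2$. A short computation shows that $D$ carries two natural surjections $\rho_S:D\to S$ and $\rho_R:D\to R$ (the first killing the copy of $E$, the second the copy of $H$), with $\ker\rho_S\cong E$ and $\ker\rho_R\cong H$, assembling into a short exact sequence $\epsilon_D:0\to P\to D\xrightarrow{(\rho_S,\rho_R)} S\oplus R\to 0$. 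Pulling $\epsilon_D$ back along the two summand inclusions $S\hookrightarrow S\oplus R$ and $R\hookrightarrow S\oplus R$ identifies its components, via $\ker\rho_R\cong H$ and $\ker\rho_S\cong E$, as the left column $[H]\in Ext^1(S,P)$ and the top row $[E]\in Ext^1(R,P)$; that is, $\epsilon_D=([H],[E])$ under $Ext^1(S\oplus R,P)=Ext^1(S,P)\oplus Ext^1(R,P)$.

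The key step is then to show that filling in Diagram $2$ is equivalent to a single extension problem for $D$. From any centre $X$ one checks that $D$ embeds in $X$ with $X/D\cong Q$, $X/E\cong G$, and $X/H\cong F$ (the diagram chase needed is that $E\cap H=P$ inside $X$), so that $X$ determines a class in $Ext^1(Q,D)$ with $(\rho_S)_*[X]=[G]$ and $(\rho_R)_*[X]=[F]$; conversely any $[X]\in Ext^1(Q,D)$ satisfying these two pushforward conditions reconstitutes the full diagram, the middle column and row being $X/E$ and $X/H$. Hence Diagram $2$ exists if and only if $([G],[F])\in Ext^1(Q,S)\oplus Ext^1(Q,R)=Ext^1(Q,S\oplus R)$ lies in the image of $(\rho_S,\rho_R)_*$. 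Applying $Ext^*(Q,-)$ to $\epsilon_D$ gives a long exact sequence in which $(\rho_S,\rho_R)_*$ is followed by the connecting map $\partial:Ext^1(Q,S\oplus R)\to Ext^2(Q,P)$, so by exactness $([G],[F])$ is in the image if and only if $\partial([G],[F])=0$. Since the connecting homomorphism of a covariant $Ext$ sequence is Yoneda multiplication by the extension class (and carries no auxiliary sign), $\partial([G],[F])=\epsilon_D\cup([G],[F])=[H]\cup[G]+[E]\cup[F]$ by biadditivity of the Yoneda product over the direct-sum decomposition $\epsilon_D=([H],[E])$. This is precisely the Baer sum, and it vanishes exactly when the diagram completes.

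I expect the main obstacle to lie not in the homological computation but in the two ``equivalence'' verifications that make the reduction to $D$ faithful: first, that a centre $X$ genuinely realises $D$ as a subobject with the three stated quotients, using exactness of the middle row and column; and second, that a solution $[X]\in Ext^1(Q,D)$ of the pushforward conditions can be upgraded to all the commuting squares and exact lines of Diagram $2$ with the prescribed corner maps. Keeping the antidiagonal sign consistent throughout is exactly what separates the Baer sum from the naive difference: because the covariant connecting map introduces no sign of its own, the lone minus sign in the definition of $D$ is the sole source of the additive, rather than subtractive, obstruction.
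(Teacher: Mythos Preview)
The present paper does not prove this proposition at all: it is quoted from Pawar's article and stated here only as background (see the remark immediately following the statement), so there is no in-paper proof to compare your attempt against.

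That said, your argument is correct and self-contained. The reduction via the pushout $D=(E\oplus H)/\{(\nu p,-\mu p)\}$ is exactly the right move; the identification of $\epsilon_D$ with the pair $([H],[E])$ under $Ext^1(S\oplus R,P)\cong Ext^1(S,P)\oplus Ext^1(R,P)$ is clean, and bilinearity of the Yoneda product over the direct sum does give $\partial([G],[F])=[H]\cup[G]+[E]\cup[F]$ with no extraneous sign. The two verifications you flag as delicate---that $D$ embeds in any centre $X$ with the three named quotients, and that a class in $Ext^1(Q,D)$ with the prescribed pushforwards rebuilds Diagram~2---are routine once the exactness of the middle row and column is in hand, and your sketch already contains the key point $E\cap H=P$ inside $X$.

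It is worth noting a pleasant duality with the paper's treatment of the companion \emph{uniqueness} statement (Proposition~\ref{rpthmu}): there the auxiliary object is the pullback $Y=F\times_Q G$, sitting in $0\to P\to X\to Y\to 0$, whereas you use the pushout $D=E\amalg_P H$ and the sequence $0\to D\to X\to Q\to 0$. Your choice places the obstruction as a connecting map in the covariant sequence $Ext^*(Q,-)$; the dual choice would locate it in the contravariant sequence $Ext^*(-,P)$ and is presumably closer to Pawar's original argument, but either route yields the same Baer sum.
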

In \cite{rp}, this proposition is stated for small categories, however as noted in the remark 3.3 of \cite{rp}, the result holds good for general abelian categories.\\
Stating the uniqueness theorem requires a bit of background. First pull back the exact sequence $0 \to R \to F \to Q \to 0$ by the map $G \to Q$ to get : 
\begin{center}
\begin{tikzcd}
0 \arrow[r] & R \arrow[r] \arrow[d, "id_R"] & Y \arrow[r] \arrow[d] & G \arrow[r] \arrow[d] & 0 \\
0 \arrow[r] & R \arrow[r]                   & F \arrow[r]           & Q \arrow[r]           & 0
\end{tikzcd}
\end{center}
Applying the Snake lemma, Rakesh Pawar obtains 
\begin{center}
\begin{tikzcd}
            &                               & 0 \arrow[d]                   & 0 \arrow[d]           &   \\
            &                               & R \arrow[d] \arrow[r, "id_R"] & R \arrow[d] \arrow[r] & 0 \\
0 \arrow[r] & S \arrow[r] \arrow[d, "id_S"] & Y \arrow[r] \arrow[d]         & F \arrow[r] \arrow[d] & 0 \\
0 \arrow[r] & S \arrow[r] \arrow[d]         & G \arrow[r] \arrow[d]         & Q \arrow[r] \arrow[d] & 0 \\
            & 0                             & 0                             & 0                     &  
\end{tikzcd}
\end{center}.
The injective maps $R \to Y$, and $S \to Y$ induce a map $R \oplus S$ induce a short exact sequence $0 \to R \oplus S \to Y \to Q \to 0$. Applying the functor $Hom(-,P)$, one obtains the long exact sequence
\begin{equation}\label{rpexact}
\cdots \to Hom(R \oplus S, P) \xrightarrow{\alpha} Ext^1(Q,P) \xrightarrow{\beta} Ext^1(Y,P) \to \cdots.  
\end{equation}
 
\begin{proposition}\label{rpthmu}
 (Rakesh Pawar) If the map $\alpha$ is surjective, then $[X] \in Ext^1(Y,P)$ is unique.
\end{proposition}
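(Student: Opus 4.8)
The plan is to show that any completion of Diagram 1 to Diagram 2 determines a canonical class in $Ext^1(Y,P)$, to identify the image of this class under the map $Ext^1(Y,P)\to Ext^1(R\oplus S,P)$ appearing in (\ref{rpexact}), and then to read off uniqueness from exactness of (\ref{rpexact}) together with the hypothesis that $\alpha$ is surjective.

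First I would observe that every completion $X$ contains $P$ as a subobject --- the common image of the composites $P\hookrightarrow E\hookrightarrow X$ and $P\hookrightarrow H\hookrightarrow X$ --- and that the quotient $X/P$ is canonically isomorphic to $Y$. Indeed, quotienting the middle row $0\to H\to X\to F\to 0$ and the middle column $0\to E\to X\to G\to 0$ of Diagram 2 by $P$ produces short exact sequences $0\to S\to X/P\to F\to 0$ and $0\to R\to X/P\to G\to 0$ which are compatible over $Q$, so the universal property of the pullback $Y=F\times_Q G$ yields a morphism $\phi\colon X/P\to Y$. A short diagram chase shows $\ker\phi=(E\cap H)/P=0$ (the point being that $E\cap H=P$ inside $X$), so $\phi$ is a monomorphism; comparing $0\to R\to X/P\to G\to 0$ with Pawar's snake-lemma sequence $0\to R\to Y\to G\to 0$, with the induced maps on $R$ and on $G$ being identities, the five lemma shows that $\phi$ is an isomorphism. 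Hence the extension $0\to P\to X\to X/P\cong Y\to 0$ defines a well-defined class $[X]\in Ext^1(Y,P)$; conversely $X$, together with all of the dashed maps of Diagram 2, can be reconstructed from this extension of $Y$ by $P$, so it suffices to show that $[X]$ is independent of the chosen completion.

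Next I would compute the image of $[X]$ under the map $Ext^1(Y,P)\to Ext^1(R\oplus S,P)$ of (\ref{rpexact}), which is the pullback of $0\to P\to X\to Y\to 0$ along $R\oplus S\hookrightarrow Y$. Via $\phi^{-1}$ the subobject $R\subseteq Y$ corresponds to $E/P\subseteq X/P$, whose preimage in $X$ is $E$, so the $R$-component of this pullback is the class $[E]$ of the top row of Diagram 1; similarly the $S$-component is the class $[H]$ of the left column. Thus $[X]$ maps to $([E],[H])\in Ext^1(R,P)\oplus Ext^1(S,P)=Ext^1(R\oplus S,P)$, and this element depends only on the data of Diagram 1, not on the completion.

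Finally, for two completions $X_1$ and $X_2$ the difference $[X_1]-[X_2]$ lies in the kernel of $Ext^1(Y,P)\to Ext^1(R\oplus S,P)$, hence in the image of $\beta\colon Ext^1(Q,P)\to Ext^1(Y,P)$ by exactness of (\ref{rpexact}). If $\alpha$ is surjective, then exactness of (\ref{rpexact}) at $Ext^1(Q,P)$ gives $\ker\beta=Ext^1(Q,P)$, so $\beta=0$ and therefore $[X_1]=[X_2]$; combined with Proposition \ref{rpthme} this shows the completion is unique up to isomorphism whenever it exists. I expect the only genuine work to be the two diagram chases above --- verifying that $\phi$ is an isomorphism compatible with all the structure maps, and identifying the restriction of $[X]$ with $([E],[H])$ on the nose, modulo the sign conventions implicit in (\ref{rpexact}) --- since the conclusion is then a formal consequence of exactness.
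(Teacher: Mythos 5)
Your proposal is correct and follows exactly the route the paper's setup intends (the paper itself only quotes Pawar's result): the completion defines a class in $Ext^1(Y,P)$ whose restriction along $R\oplus S\hookrightarrow Y$ is the pair $([E],[H])$ fixed by Diagram 1, and surjectivity of $\alpha$ forces $\beta=0$ in the exact sequence (\ref{rpexact}), so any two completions give equal classes. The only quibble is your aside that $X$ ``together with all of the dashed maps'' can be reconstructed from the extension class --- the maps $i,j$ are determined only up to automorphisms of the rows and columns, which is precisely the compatibility subtlety the paper addresses separately in Proposition \ref{finaluniqueness}; this does not affect the statement actually being proved, namely uniqueness of $[X]\in Ext^1(Y,P)$.
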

Thus if $X_1$ is another abelian group together with maps $i_1,j_1,m_1,n_1$, then $[X]=[X_1] \in Ext^1(Y,P)$. Equivalently there is an abelian group isomorphism $\phi : X \to X_1$ such that the diagram
\begin{center}
\begin{tikzcd}
0 \arrow[r] & P \arrow[r] \arrow[d, "id_P"] & X \arrow[r] \arrow[d, "\phi"] & Y \arrow[r] \arrow[d, "id_Y"] & 0 \\
0 \arrow[r] & P \arrow[r]                   & X_1 \arrow[r]                 & Y \arrow[r]                   & 0
\end{tikzcd} 
\end{center}
commutes. Since $Y$ is the pullback 
\begin{center}
\begin{tikzcd}
Y=F \underset{Q}{\times} G \arrow[r] \arrow[d] & F \arrow[d] \\
G \arrow[r]                                    & Q          
\end{tikzcd},
\end{center}
it follows that the morphism $\phi$ is compatible with $(m,m_1)$ and $(n,n_1)$ i.e. $m_1 \circ \phi =m$ and $n_1 \circ \phi =n$, and that $\phi \circ i|_P =i_1|_P$, $\phi \circ j|_P =j_1|_P$. (Here we are considering $P$ as a subgroup of $H$ via $\mu$, and of $E$ via $\nu$.) Alternatively we could say that $\phi \circ i \circ \mu =i_1 \circ \mu$, and $\phi \circ j \circ \nu = j_1 \circ \nu$. 
However, we need a stronger compatibility result for our purposes : $\phi \circ i =i_1$, and $\phi \circ j = j_1$. We obtain  this in the next section.
\section{Existence and uniqueness results for differential cohomology theories}\label{mainsection}
Let $(X_1,i_1,j_1,m_1,n_1)$ and $(X_2,i_2,j_2,m_2,n_2)$ be two extensions of Diagram 1. Let us say that an isomorphism $\phi : X_1 \to X_2$ is a \textit{compatible isomorphism} between these two extensions if $\phi \circ i_1 =i_2, \phi \circ j_1 =j_2, m_2 \circ \phi =m_1$, and $n_2 \circ \phi =n_1$. The following proposition gives a necessary and sufficient criterion for an extension (assuming it exists) to be determined upto such equivalence.
\begin{proposition}\label{finaluniqueness}
 Suppose $(X_1,i_1,j_1,m_1,n_1)$ is an extension of Diagram 1. Let $E_1=j_1(E) \subset X_1$, and $H_1=i_1(H) \subset X_1$.
 The the following are equivalent :
 \begin{enumerate}
  \item For any other extension $(X_2,i_2,j_2,m_2,n_2)$ of the diagram, there exists an isomorphism $\phi : X_1 \to X_2$ compatible with the two extensions.
  \item The map $Hom(R \oplus S, P) \xrightarrow{\alpha} Ext^1(Q,P)$ is surjective, and every homomorphism $\lambda : E_1+H_1 \to P$ which vanishes on $P_1 \subset E_1+H_1$ admits an extension $\Lambda : X_1 \to P$.

 \end{enumerate}
\end{proposition}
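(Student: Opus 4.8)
The plan is to carry out every comparison inside the single group $X_1$. Write $\iota_1:=i_1\circ\mu=j_1\circ\nu\colon P\to X_1$, with image $P_1$, and $\pi_1:=(m_1,n_1)\colon X_1\to Y$. Commutativity and exactness of Diagram 2 give immediately: $\pi_1$ is surjective with kernel $P_1$; $\pi_1(E_1)$ is the copy of $R$ in $Y=F\times_QG$ and $\pi_1(H_1)$ the copy of $S$ (as in the construction preceding \eqref{rpexact}); $E_1\cap H_1=P_1$; hence $E_1+H_1=\pi_1^{-1}(R\oplus S)$, so $(E_1+H_1)/P_1\cong R\oplus S$ and $\pi_1$ induces a short exact sequence $0\to E_1+H_1\to X_1\to Q\to0$. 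Under $(E_1+H_1)/P_1\cong R\oplus S$ a homomorphism $E_1+H_1\to P$ vanishes on $P_1$ exactly when it factors through $R\oplus S$, and by naturality of the connecting homomorphism (for the morphism of the last sequence onto $0\to R\oplus S\to Y\to Q\to0$) the obstruction to extending such a $\lambda$ over $X_1$ is $\alpha$ applied to the induced map $R\oplus S\to P$; thus the second clause of $(2)$ says precisely that $\alpha\equiv0$. This observation is worth recording but is not logically needed below.

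For $(2)\Rightarrow(1)$: let $(X_2,i_2,j_2,m_2,n_2)$ be any extension. Since $\alpha$ is surjective, Proposition \ref{rpthmu} and the ensuing discussion give $[X_1]=[X_2]$ in $Ext^1(Y,P)$, hence an isomorphism $\phi_0\colon X_1\to X_2$ with $(m_2,n_2)\phi_0=(m_1,n_1)$ and $\phi_0\iota_1=\iota_2$. The map $\phi_0 i_1-i_2\colon H\to X_2$ lands in $\ker(m_2,n_2)=\iota_2(P)$ — because $m_2i_2=0=m_1i_1$ and $n_2i_2=n_1i_1$, both the fixed composite $H\twoheadrightarrow S\hookrightarrow G$ of Diagram 1 — and vanishes on $\mu(P)$; symmetrically $\phi_0 j_1-j_2$ lands in $\iota_2(P)$ and vanishes on $\nu(P)$. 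These glue to a homomorphism $\lambda_0\colon E_1+H_1\to P$ vanishing on $P_1$. By the second clause of $(2)$ it extends to $\Lambda\colon X_1\to P$; then $\Lambda\iota_1=0$, so $(\iota_1\Lambda)^2=0$ and $\phi:=\phi_0\circ(\mathrm{id}_{X_1}-\iota_1\Lambda)$ is an isomorphism. Using $\pi_1\iota_1=0$, $\phi_0\iota_1=\iota_2$, $\Lambda i_1=\lambda_0 i_1$ and $\Lambda j_1=\lambda_0 j_1$, a direct computation gives $(m_2,n_2)\phi=(m_1,n_1)$, $\phi i_1=i_2$ and $\phi j_1=j_2$, so $\phi$ is a compatible isomorphism.

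For $(1)\Rightarrow(2)$: for the second clause, given $\lambda\colon E_1+H_1\to P$ vanishing on $P_1$, let $X_2$ be the \emph{$\lambda$-twist} of $X_1$: the same underlying group with the same $m_1,n_1$, but with $i_1,j_1$ replaced by $h\mapsto i_1(h)+\iota_1\lambda(i_1(h))$ and $e\mapsto j_1(e)+\iota_1\lambda(j_1(e))$. Since $\lambda|_{P_1}=0$ these are $i_1,j_1$ precomposed with automorphisms of $H,E$, and the whole of Diagram 2 still commutes, so $X_2$ is an extension of Diagram 1. By $(1)$ there is a compatible $\phi\colon X_1\to X_2$; as $(m_1,n_1)\phi=(m_1,n_1)$ it has the shape $\mathrm{id}+\iota_1\Lambda$, and $\phi i_1=i_2$, $\phi j_1=j_2$ force $\Lambda$ to agree with $\lambda$ on $H_1$ and on $E_1$, hence on $E_1+H_1$. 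For the first clause (surjectivity of $\alpha$), suppose it fails; by exactness of \eqref{rpexact} pick $\xi\in Ext^1(Q,P)$ with $\beta(\xi)\neq0$. Since $\beta(\xi)$ is pulled back from $Q$, it restricts trivially along $R\hookrightarrow Y$ and $S\hookrightarrow Y$, so $c:=[X_1]+\beta(\xi)$ has the same restrictions over $R$ and over $S$ as $[X_1]$, namely $[E]$ and $[H]$. Picking a representative $Z$ of $c$, setting $m_2,n_2$ to be $Z\to Y\to F$ and $Z\to Y\to G$, and using isomorphisms $E\cong\ker n_2$, $H\cong\ker m_2$ of the induced extensions (available by the previous sentence) to define $j_2,i_2$, one obtains an extension $(Z,i_2,j_2,m_2,n_2)$ of Diagram 1 with $[Z]=c\neq[X_1]$. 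A compatible isomorphism $X_1\to Z$ would be a morphism of extensions of $Y$ by $P$, forcing $[X_1]=[Z]$; this contradicts $(1)$, so $\alpha$ is surjective.

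The step I expect to be the main obstacle is this last one: producing, from the failure of surjectivity of $\alpha$, a genuine extension of the \emph{whole} of Diagram 1 — not merely of the data $0\to P\to X\to Y\to0$ — whose Yoneda class differs from $[X_1]$. What makes it work is that $\beta(\xi)$ becomes trivial after restriction to $R\oplus S$, so that $c=[X_1]+\beta(\xi)$ retains the prescribed extensions $E$ of $R$ by $P$ and $H$ of $S$ by $P$; this is precisely what lets $i_2$ and $j_2$ be reconstructed. The remaining points — that the $\lambda$-twist is an extension of Diagram 1, and the identities behind the correction $\phi=\phi_0(\mathrm{id}-\iota_1\Lambda)$ — are routine diagram chases combined with the five lemma.
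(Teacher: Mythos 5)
Your proof is correct, and in the direction $(2)\Rightarrow(1)$ it is essentially the argument of the paper: invoke Proposition \ref{rpthmu} to get $\phi_0$ with $(m_2,n_2)\phi_0=(m_1,n_1)$ and $\phi_0\iota_1=\iota_2$, assemble the defect $\lambda_0$ on $E_1+H_1$ from $\phi_0 i_1-i_2$ and $\phi_0 j_1-j_2$, extend it by hypothesis, and correct $\phi_0$; your $\phi_0\circ(\mathrm{id}-\iota_1\Lambda)$ is the paper's $\phi+\eta$ in multiplicative disguise, with $(\iota_1\Lambda)^2=0$ replacing the appeal to the short five lemma. Where you genuinely go beyond the paper is $(1)\Rightarrow(2)$, which the paper dismisses with ``retracing the steps'': your $\lambda$-twist of $X_1$ (legitimate, since $\lambda|_{P_1}=0$ makes $i_1+\iota_1\lambda i_1$ and $j_1+\iota_1\lambda j_1$ equal to $i_1,j_1$ precomposed with unipotent automorphisms of $H,E$, so all squares of Diagram 2 still commute) yields the extension clause, and your realization of $[X_1]+\beta(\xi)$ as an honest extension of Diagram 1 --- using that $\beta(\xi)$ restricts trivially to $R$ and $S$ so that the restricted classes remain $[E]$ and $[H]$, which is exactly what allows $i_2,j_2$ to be rebuilt --- yields surjectivity of $\alpha$; I checked the relevant squares (e.g.\ $\iota_R\circ\mathrm{pr}_R=m_2|_{\ker n_2}$ inside $Y=F\times_Q G$) and the construction is sound, so this supplies the converse in full rather than by assertion. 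Your side observation is also correct and worth noting: since $(E_1+H_1)/P_1\cong R\oplus S$ and $X_1/(E_1+H_1)\cong Q$, naturality of the connecting map for \eqref{rpexact} identifies the obstruction to extending $\lambda$ with $\alpha(\bar\lambda)$, so the second clause of $(2)$ is equivalent to $\alpha\equiv 0$ and condition $(2)$ as a whole is equivalent to $Ext^1(Q,P)=0$ --- a simplification of the criterion (and of the conditions in Propositions \ref{dcconditions} and \ref{dkconditions}) that the paper does not record.
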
\begin{proof} We shall show that (2) $\implies$ (1). The other direction follows by retracing the steps of the proof.\\
By Rakesh Pawar's result \ref{rpthmu}, there is an isomorphism $\phi : X_1 \to X_2$ such that $m_2 \circ \phi =m_1$, and $n_2 \circ \phi =n_1$. The strategy is to find a morphism $\eta : X_1 \to X_2$ such that the morphism $\phi'\equiv \phi + \eta$ is a compatible isomorphism i.e. $\phi' \circ i_1 =i_2, \phi' \circ j_1 =j_2, m_2 \circ \phi' =m_1$, and $n_2 \circ \phi' =n_1$.
\begin{center}
\begin{tikzcd}
            & 0 \arrow[d]                                         &  & 0 \arrow[d]                                                              &                                         & 0 \arrow[d]            &   \\
0 \arrow[r] & P \arrow[rr, "\nu"] \arrow[d, "\mu"]                &  & E \arrow[rr] \arrow[d, "j_1"'] \arrow[rdd, "j_2"]                        &                                         & R \arrow[r] \arrow[d]  & 0 \\
0 \arrow[r] & H \arrow[dd] \arrow[rr, "i_1"] \arrow[rrrd, "i_2"'] &  & X_1 \arrow[rr, "m_1" description] \arrow[rd, "\phi"'] \arrow[dd, "n_1"'] &                                         & F \arrow[r] \arrow[dd] & 0 \\
            &                                                     &  &                                                                          & X_2 \arrow[ru, "m_2"] \arrow[ld, "n_2"] &                        &   \\
0 \arrow[r] & S \arrow[d] \arrow[rr]                              &  & G \arrow[rr] \arrow[d]                                                   &                                         & Q \arrow[r] \arrow[d]  & 0 \\
            & 0                                                   &  & 0                                                                        &                                         & 0                      &  
\end{tikzcd}
\end{center}
For convenience, let us denote $P_1 \equiv i_1 \circ \mu (P) = j_1 \circ \nu (P) \subset X_1$, and $P_2 \equiv i_2 \circ \mu (P) = j_2 \circ \nu (P) \subset X_2$. Similarly let $E_1=j_1(E), E_2=j_2(E), H_1=i_1(H)$, and $H_2=i_2(H)$.\\
Now consider $\tilde{j} = j_2 - \phi \circ j_1 : E \to X_2$. Note that $m_2 \circ \tilde{j}=m_2 \circ j_2 - m_2 \circ \phi \circ j_1 = m_2 \circ j_2 -m_1 \circ j_1=0$, and $n_2 \circ \tilde{j}=n_2 \circ j_2 - n_2 \circ \phi \circ j_1 = m_2 \circ j_2 -n_1 \circ j_1=0$. Thus $\tilde{j}(E) \subset P_2$. Similarly $\tilde{i}(H) \subset P_2$. Also note that since $\phi \circ i_1|_{P} =i_2|_P$ and $\phi \circ j_1|_P =j_2|_P$ (by the discussion after proposition \ref{rpthmu}), we conclude that $\tilde{j}|_P=0=\tilde{i}|_P$. Hence, $\tilde{i}+\tilde{j} : E_1 + H_1 \to X_2$ is a well defined abelian group homomorphism taking values in $P_2$. Here we have identified $E$ with $E_1$, and $H$ with $H_1$, for notational simplicity we use the same notation for the maps $\tilde{i}$ and $\tilde{j}$. We thus have a commutative diagram : 
\begin{center}
\begin{tikzcd}
0 \arrow[r] & E_1+H_1 \arrow[r] \arrow[d, "\tilde{i}+\tilde{j}"'] & X_1 \arrow[ld, "\eta", dotted] \\
            & P_2                                                 &                               
\end{tikzcd}
\end{center}
By hypothesis, there exists an extension $\eta : X_1 \to X_2$. Let $\phi'\equiv \phi+\eta$. Then $\phi' \circ i_1 = \phi \circ i_1 + \eta \circ i_1 = \phi \circ i_1 + (i_2 - \phi \circ i_1) = i_2$, and $\phi' \circ j_1 = \phi \circ j_1 + \eta \circ j_1 = \phi \circ j_1 + (j_2 - \phi \circ j_1) = j_2$. Further since $\eta$ takes values in $P_2$, $m_2 \circ \eta =0$. Thus $m_2 \circ \phi'=m_2 \circ \phi + m_2 \circ \eta = m_2 \circ \phi = m_1$, and similarly $n_2 \circ \phi = n_1$. $\phi'$ is an isomorphism by the three lemma. Hence $\phi'$ is a required isomorphism.
\end{proof} 

\begin{corollary}\label{corpinj}
If $P$ is an injective object, there exists an extension of the diagram 1. Further for any two such extensions, there exists a compatible isomorphism between them.
\end{corollary}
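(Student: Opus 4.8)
The plan is to obtain both halves of the statement as immediate specializations of the two preceding propositions to the case where $P$ is injective. First I would address existence: by Proposition \ref{rpthme}, Diagram 1 extends to Diagram 2 precisely when the Baer sum of $[E]\cup[F]$ and $[H]\cup[G]$ vanishes in $Ext^2(Q,P)$. Since $P$ is injective, $Ext^2(Q,P)=0$, so this obstruction is automatically zero and an extension exists.

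For uniqueness up to compatible isomorphism, I would fix an extension $(X_1,i_1,j_1,m_1,n_1)$ and verify condition (2) of Proposition \ref{finaluniqueness}. Its first clause asks that $\alpha\colon Hom(R\oplus S,P)\to Ext^1(Q,P)$ be surjective; but injectivity of $P$ gives $Ext^1(Q,P)=0$ (equivalently, read off from the long exact sequence \eqref{rpexact}), so surjectivity of $\alpha$ is trivial. Its second clause asks that every homomorphism $\lambda\colon E_1+H_1\to P$ vanishing on $P_1$ extend to some $\Lambda\colon X_1\to P$; since $E_1+H_1$ is a subobject of $X_1$ and $P$ is injective, such an extension exists with no hypothesis on $\lambda$ needed at all. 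Hence condition (2) holds, and Proposition \ref{finaluniqueness} furnishes, for any other extension $(X_2,i_2,j_2,m_2,n_2)$, a compatible isomorphism $\phi\colon X_1\to X_2$.

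There is essentially no obstacle here: the content is the observation that injectivity of $P$ simultaneously annihilates the $Ext^2$ obstruction to existence and trivializes both clauses of the uniqueness criterion. The one point deserving a line of care is that $E_1+H_1\hookrightarrow X_1$ must be recognized as a genuine monomorphism in $\mathcal{A}$ (being the inclusion of a subobject), which is exactly what is required to invoke the defining extension property of the injective object $P$.
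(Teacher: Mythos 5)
Your proposal is correct and matches the intended argument: the paper states the corollary without proof precisely because, as you observe, injectivity of $P$ kills the $Ext^2(Q,P)$ obstruction in Proposition \ref{rpthme} and trivially verifies both clauses of condition (2) in Proposition \ref{finaluniqueness} (the target $Ext^1(Q,P)$ vanishes, and maps out of the subobject $E_1+H_1$ extend to $X_1$ by injectivity). Nothing further is needed.
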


Let $\man$ be the category of smooth manifolds and smooth maps between them, and let $\ab$ be the category of abelian groups and group homomorphisms.
\subsection{The case of differential characters}\label{dc}

As noted in the introduction, the Deligne cohomology groups defined as hypercohomology of a certain double complex \cite{brylinski} are isomorphic to the differential character groups $\hhat^k(M;\mrz)$ defined by Cheeger-Simons. Similarily, the de Rham-Federer currents \cite{lawsona,lawsonb} too provide a model of differential cohomology. In order to compare various models, it is important to axiomatically characterize ordinary differential cohomology. In \cite{ssdc} Simons and Sullivan define :
\begin{definition}\label{dcdef}
A functor $\hhat^k$ from $\man^{op}$ to $\ab$, together with natural transformations $i,j,c,curv$ is called a differential character functor if the following diagram in $Fun(\man^{op},\ab)$
\begin{center}
\setlength{\unitlength}{0.5cm}
\begin{picture}(24,16)
\put(4,0){$0$}
\put(21,0){$0$}
\put(5.5,1){\vector(1,1){1.7}}
\put(18.5,3){\vector(1,-1){1.7}}
\put(8,4){$\frac{\Omega^{k-1}(-)}{\Omega^{k-1}_{0}(-)}$}
\put(12.5,4){\vector(1,0){3.2}}
\put(17,4){$\Omega^{k}_{0}(-)$}
\put(13.5,4.2){\small{$d$}}
\put(2.5,6){\vector(1,1){1.7}}
\put(6.5,7){\vector(1,-1){1.7}}
\put(7.5,6.5){\small{$\beta$}}
\put(10.2,6.2){\small{$i$}}
\put(10,5.5){\vector(1,1){1.7}}
\put(14.5,7){\vector(1,-1){1.7}}
\put(15.2,6.5){\small{$curv$}}
\put(18.5,7){\small{$s$}}
\put(18.5,6){\vector(1,1){1.7}}
\put(22.5,7){\vector(1,-1){1.7}}
\put(4,8){$H^{k-1}(-;\mr)$}
\put(11,8){$\hhat^{k}(-;\mrz)$}
\put(20,8){$H^{k}(-;\mr)$}
\put(2.5,11){\vector(1,-1){1.7}}
\put(6,10){\small{$\alpha$}}
\put(6,9.5){\vector(1,1){1.7}}
\put(10.5,11){\vector(1,-1){1.7}}
\put(11.5,10.5){\small{$j$}}
\put(14,10.5){\small{$c$}}
\put(14.0,9.5){\vector(1,1){1.7}}
\put(18.5,11){\vector(1,-1){1.7}}
\put(19.5,10.5){\small{$r$}}
\put(22,10){\vector(1,1){1.7}}
\put(6,12){$H^{k-1}(-;\mrz)$}
\put(12.5,12.2){\vector(1,0){3}}
\put(16,12){$H^{k}(-;\mz)$}
\put(13.5,12.5){\small{$-B$}}
\put(5.5,15){\vector(1,-1){1.7}}
\put(18,13){\vector(1,1){1.7}}
\put(5,15){$0$}
\put(20,15){$0$}
\end{picture}
\end{center}
commutes and has exact diagonals.
\end{definition}
and prove
\begin{proposition}\label{ssdcth}
If $\hhat_1(-)$ and $\hhat_2(-)$ are two differential character functors from $\man^{op}$ to $\ab$ together with the natural transformations $(i_1,j_1,c_1,curv_1)$ and $(i_2,j_2,c_2,curv_2)$ (respectively), then there exists a unique natural equivalence $\psi : \hhat_1 \to \hhat_2$ which is compatible with the given maps i.e. $\psi \circ i_1 =i_2, \psi \circ j_1=j_2, c_2 \circ \psi =c_1$, and $curv_2 \circ \psi = curv_1$.
\end{proposition}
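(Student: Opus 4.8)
The plan is to run Pawar's extension--uniqueness machinery not in $\ab$ but in the abelian category $\mathcal{A}=Fun(\man^{op},\ab)$ of presheaves of abelian groups on $\man$, which has enough projectives, so that Proposition \ref{finaluniqueness} applies there. First I would exhibit the hexagon of Definition \ref{dcdef} as an instance of Pawar's Diagram 2 in $\mathcal{A}$: put $X=\hhat^{k}(-;\mrz)$, let the middle column $0\to E\to X\to G\to 0$ of Diagram 2 be the Cheeger--Simons sequence $0\to \frac{\Omega^{k-1}(-)}{\Omega^{k-1}_{0}(-)}\xrightarrow{i}\hhat^{k}\xrightarrow{c}H^{k}(-;\mz)\to 0$ and the middle row $0\to H\to X\to F\to 0$ be $0\to H^{k-1}(-;\mrz)\xrightarrow{j}\hhat^{k}\xrightarrow{curv}\Omega^{k}_{0}(-)\to 0$. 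Then the four corner short exact sequences of Diagram 1 are forced, and none of them involves the differential character functor: they are assembled from $P=H^{k-1}(-;\mr)/\mathrm{im}\big(H^{k-1}(-;\mz)\big)$, $R=d\,\Omega^{k-1}(-)$, $S=\ker\big(H^{k}(-;\mz)\to H^{k}(-;\mr)\big)$ and $Q=\mathrm{im}\big(H^{k}(-;\mz)\to H^{k}(-;\mr)\big)$, with the evident maps; one checks that their rows and columns are exact and that the compatibilities demanded of Diagram 2 (the standard facts $c\circ j=-B$, $curv\circ i=d$, and that $i$ and $j$ agree on $P$) hold. The outcome of this step is that every differential character functor is, in this way, an extension in $\mathcal{A}$ of \emph{one and the same} Diagram 1.

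Now take two differential character functors $(\hhat_{1},i_{1},j_{1},c_{1},curv_{1})$ and $(\hhat_{2},i_{2},j_{2},c_{2},curv_{2})$. By Proposition \ref{finaluniqueness} applied in $\mathcal{A}$ with $X_{1}=\hhat_{1}$, there is a compatible natural isomorphism $\psi\colon\hhat_{1}\to\hhat_{2}$ as soon as condition (2) of that proposition holds in $\mathcal{A}$, and I would first simplify this condition. For \emph{any} extension $X$ of Diagram 1 one has, from exactness of the rows and columns alone, $X/P_{1}\cong Y$ and $(\mathrm{im}\,i_{1}+\mathrm{im}\,j_{1})/P_{1}\cong R\oplus S$, where $P_{1}$ is the image of $P$ in $X$ and $Y=F\times_{Q}G=\Omega^{k}_{0}(-)\times_{Q}H^{k}(-;\mz)$ is the pullback appearing in \cite{rp}; both $Y$ and $R\oplus S$ are independent of $X$. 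Hence the second clause of condition (2) says exactly that $Hom(Y,P)\to Hom(R\oplus S,P)$ is onto, i.e., by the exact sequence \eqref{rpexact}, that the connecting map $\alpha$ there is zero; together with the first clause (that $\alpha$ be onto) this is equivalent to $Ext^{1}_{\mathcal{A}}(Q,P)=0$. So: if $Ext^{1}_{\mathcal{A}}(Q,P)=0$, a compatible natural isomorphism $\psi\colon\hhat_{1}\to\hhat_{2}$ exists.

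Uniqueness of $\psi$ follows from one more vanishing: if $\psi,\psi'$ are two compatible natural equivalences, then $\psi-\psi'$ is killed by $c_{2}$ and by $curv_{2}$ and vanishes on $\mathrm{im}\,i_{1}+\mathrm{im}\,j_{1}$, so it factors through $\hhat_{1}/(\mathrm{im}\,i_{1}+\mathrm{im}\,j_{1})\cong Q$ and has image inside $\ker c_{2}\cap\ker curv_{2}=\mathrm{im}\,i_{2}\cap\mathrm{im}\,j_{2}\cong P$; hence $\psi-\psi'$ is induced by a morphism in $Hom_{\mathcal{A}}(Q,P)$, and $\psi=\psi'$ whenever $Hom_{\mathcal{A}}(Q,P)=0$. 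In sum, the whole proposition reduces to the two statements
\[
Hom_{\mathcal{A}}(Q,P)=0
\qquad\text{and}\qquad
Ext^{1}_{\mathcal{A}}(Q,P)=0,
\qquad \mathcal{A}=Fun(\man^{op},\ab),
\]
for the fixed functors $P=H^{k-1}(-;\mr)/\mathrm{im}\,H^{k-1}(-;\mz)$ and $Q=\mathrm{im}\big(H^{k}(-;\mz)\to H^{k}(-;\mr)\big)$; these two vanishings constitute the condition \ref{dcconditions} alluded to in the introduction.

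The hard part is exactly these two vanishings, and I expect them to be the real obstacle. They are not formal: $P(M)$ is divisible, hence an injective $\mz$-module, so $Ext^{1}_{\ab}(Q(M),P(M))=0$ pointwise, yet already $Hom_{\ab}(Q(S^{1}),P(S^{1}))=Hom(\mz,\mr/\mz)\ne 0$, so the needed vanishing is genuinely a statement about the presheaf category $\mathcal{A}$ --- for $Ext^{1}$ it hides a $\varprojlim^{1}$ over the (large) diagram $\man$ that pointwise injectivity of $P$ cannot control. What one really has to show is that there are no nonzero natural operations, and no nonzero functorial extensions, from the torsion-free functor $\mathrm{im}(H^{k}(-;\mz)\to H^{k}(-;\mr))$ to the divisible functor $H^{k-1}(-;\mr)/\mathrm{im}\,H^{k-1}(-;\mz)$ --- an operation lowering cohomological degree by one. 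To kill $Hom$ I would test on spheres and tori using their degree-$n$ self maps: on $S^{1}$, comparing $f^{*}$ on $H^{1}$ (multiplication by $n$) with $f^{*}$ on $H^{0}$ (the identity) forces $\tau_{S^{1}}=0$ for any natural $\tau\colon Q\to P$, and naturality propagates this after reducing to manifolds of finite type; the vanishing of $Ext^{1}$ I would attack either by resolving $Q$ by representable-type functors or by an explicit cocycle computation in the spirit of Simons and Sullivan. It is precisely this last, topological input that the purely homological Proposition \ref{finaluniqueness} does not provide.
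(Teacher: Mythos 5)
There is a genuine gap: your proposal does not prove Proposition \ref{ssdcth}; it only reformulates it. Your reduction of the problem to the two vanishing statements $Hom_{\mcA}(Q,P)=0$ and $Ext^1_{\mcA}(Q,P)=0$ in $\mcA=Fun(\man^{op},\ab)$ is essentially sound (and the observation that condition (2) of Proposition \ref{finaluniqueness}, read through the identifications $(E_1+H_1)/P_1\cong R\oplus S$ and $X_1/P_1\cong Y$, collapses to $Ext^1_{\mcA}(Q,P)=0$, with $Hom_{\mcA}(Q,P)=0$ accounting for uniqueness of $\psi$, is a nice sharpening of Proposition \ref{dcconditions}). But everything after that point is a statement of intent rather than an argument: the $S^1$ degree-map computation only shows $\tau_{S^1}=0$, the claim that ``naturality propagates this after reducing to manifolds of finite type'' is not carried out, and for $Ext^1_{\mcA}(Q,P)$ you merely name two possible strategies. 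Since, as you yourself note, pointwise divisibility of $P(M)$ gives no control over $Hom$ or $Ext^1$ computed in the presheaf category, nothing in the write-up establishes either the existence or the uniqueness of the compatible natural equivalence. A proof that stops at ``the hard part is exactly these two vanishings'' has not proved the theorem.

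For comparison: the paper does not prove Proposition \ref{ssdcth} by this route either --- the statement is Simons and Sullivan's theorem, cited from \cite{ssdc}, whose proof uses genuinely topological input rather than Pawar's homological machinery. Within the paper, only the weaker pointwise statement (Proposition \ref{directdc}, uniqueness of the group $\hhat^k(M;\mrz)$ for each fixed $M$, via injectivity of the divisible group $P(M)$ in $\ab$) is actually proved, and the functor-level question is deliberately left open in the form of the equivalent conditions of Proposition \ref{dcconditions}; the introduction states explicitly that no complete proof of the functor statement is available by this method. So your proposal lands exactly where the paper's programme stops, and the missing step --- verifying the vanishing (or the equivalent surjectivity and extension conditions) in $Fun(\man^{op},\ab)$ --- is precisely the open part; to actually prove Proposition \ref{ssdcth} you would need either to supply that functor-category computation or to fall back on Simons and Sullivan's original argument.
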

Here, we observe that the following proposition is a direct consequence of Rakesh Pawar's results \ref{rpthme},\ref{rpthmu} 
\begin{proposition}\label{directdc}
Let $M$ be a smooth manifold. Then there exists a group $G$ together with maps $i,j,c,curv$ such that the following diagram commutes and has short exact diagonals : 
\begin{center}
\setlength{\unitlength}{0.5cm}
\begin{picture}(24,16)
\put(4.5,0.5){$0$}
\put(20.5,0.5){$0$}
\put(5.5,1){\vector(1,1){1.7}}
\put(18.5,3){\vector(1,-1){1.7}}
\put(8,4){$\frac{\Omega^{k-1}(M)}{\Omega^{k-1}_{0}(M)}$}
\put(12.5,4){\vector(1,0){3.2}}
\put(17,4){$\Omega^{k}_{0}(M)$}
\put(13.5,4.5){\small{$d$}}
\put(2.5,6){\vector(1,1){1.7}}
\put(6.5,7){\vector(1,-1){1.7}}
\put(7.5,6.5){\small{$\beta$}}
\put(10.5,7){\small{$i$}}
\put(10.5,6){\vector(1,1){1.7}}
\put(14.5,7.4){\vector(1,-1){1.8}}
\put(15.0,7){\small{$curv$}}
\put(18.5,7){\small{$s$}}
\put(18.5,6){\vector(1,1){1.7}}
\put(22,7){\vector(1,-1){1.7}}
\put(4,8){$H^{k-1}(M;\mr)$}
\put(13,8){$G$}
\put(20,8){$H^{k}(M;\mr)$}
\put(2.5,11){\vector(1,-1){1.7}}
\put(6.5,10.5){\small{$\alpha$}}
\put(6.5,9.5){\vector(1,1){1.7}}
\put(10.5,11){\vector(1,-1){1.7}}
\put(11.5,10.5){\small{$j$}}
\put(14,10.5){\small{$c$}}
\put(14.0,9.5){\vector(1,1){1.7}}
\put(18.5,11){\vector(1,-1){1.7}}
\put(19.5,10.5){\small{$r$}}
\put(22,9.6){\vector(1,1){1.7}}
\put(6,12){$H^{k-1}(M;\mrz)$}
\put(13,12){\vector(1,0){2.5}}
\put(16,12){$H^{k}(M;\mz)$}
\put(13.5,12.5){\small{$-B$}}
\put(5.5,14.8){\vector(1,-1){1.7}}
\put(18,13){\vector(1,1){1.7}}
\put(5,15){$0$}
\put(20,15){$0$}
\end{picture}
\end{center}
Furthermore, if $G'$ is any other abelian group together with maps $i',j',c',curv'$ which make the diagram commute and have short exact diagonals, then there exists an isomorphism $\phi : G \to G'$ such that $\phi \circ i =i', \phi \circ j = j', c'\circ \phi =c$, and $curv' \circ \phi =curv$.
\begin{proof} First, note that the diagram above can be redrawn \cite{bb} as :
 \begin{center}
\begin{tikzcd}
            & 0 \arrow[d]                                                               & 0 \arrow[d]                                                        & 0 \arrow[d]                                             &   \\
0 \arrow[r] & {\frac{H^{k-1}(M;\mathbb{R})}{H^{k-1}(M;\mathbb{Z})}} \arrow[r] \arrow[d] & \frac{\Omega^{k-1}(M)}{\Omega_0^{k-1}(M)} \arrow[r] \arrow[d, "i"] & d\Omega^{k-1}(M) \arrow[r] \arrow[d]                    & 0 \\
0 \arrow[r] & {H^{k-1}(M;\mathbb{R}/\mathbb{Z})} \arrow[r, "j"] \arrow[d]               & G \arrow[r, "curv"] \arrow[d, "c"]                                 & \Omega_0^k(M) \arrow[r] \arrow[d]                       & 0 \\
0 \arrow[r] & {Ext(H_{k-1}(X;\mathbb{Z}),\mathbb{Z})} \arrow[r] \arrow[d]               & H^k(M;\mathbb{Z}) \arrow[r] \arrow[d]                              & {Hom(H_k(X;\mathbb{Z}),\mathbb{Z})} \arrow[r] \arrow[d] & 0 \\
            & 0                                                                         & 0                                                                  & 0                                                       &  
\end{tikzcd}
\end{center}
The proposition follows from corollary \ref{corpinj} by noting that $P=\frac{H^{k-1}(M;\mr)}{H^{k-1}(M;\mz)}$ is divisible, and hence injective.
\end{proof}
\end{proposition}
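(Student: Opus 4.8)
The plan is to recognise the hexagon as an instance of Pawar's Diagram 1 and then read off both existence and uniqueness from Corollary \ref{corpinj}. The first and only substantive step is the \emph{unfolding}: in a standard way a commutative hexagon with exact diagonals can be redrawn as a $3\times 3$ array with short exact rows and columns, obtained by cutting each diagonal long exact sequence into short exact pieces. Carrying this out for the differential character hexagon over a fixed manifold $M$, the upper diagonal --- the long exact sequence of $0\to\mz\to\mr\to\mrz\to 0$ --- places $P=\frac{H^{k-1}(M;\mr)}{H^{k-1}(M;\mz)}$ in the top-left slot; de Rham's theorem turns the top row into $0\to P\to\frac{\Omega^{k-1}(M)}{\Omega^{k-1}_0(M)}\xrightarrow{d}d\Omega^{k-1}(M)\to 0$; the two Cheeger--Simons short exact sequences become the middle column and the middle row through the sought-after object $G$; and the universal coefficient theorem for $H^k(M;\mz)$ fills out the bottom row with $S=Ext(H_{k-1}(M;\mz),\mz)$ and $Q=Hom(H_k(M;\mz),\mz)$. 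This produces a diagram of the shape of Diagram 1 with short exact rows and columns.

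Next I would match up the data so that Corollary \ref{corpinj} applies verbatim. Under this translation, Pawar's two maps into the central object, $j\colon E\to X$ and $i\colon H\to X$, are precisely the hexagon maps $i$ and $j$ into $G$, while his two maps out of it, $m\colon X\to F$ and $n\colon X\to G$, are $curv$ and $c$; so a compatible isomorphism of extensions of Diagram 1 in the sense of Corollary \ref{corpinj} is exactly an isomorphism $\phi\colon G\to G'$ with $\phi\circ i=i'$, $\phi\circ j=j'$, $c'\circ\phi=c$ and $curv'\circ\phi=curv$. Corollary \ref{corpinj} then yields the existence of $G$ and its uniqueness up to such a $\phi$ as soon as the corner object $P$ is injective --- and it is, since $H^{k-1}(M;\mr)$ is an $\mr$-vector space, hence divisible, so its quotient $P$ is divisible, and divisible abelian groups are precisely the injective objects of $\ab$.

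The part that needs care is the unfolding. Exactness of the two new rows and two new columns is not handed to us by exactness of the diagonals; it requires a short diagram chase, and the four corner groups must be pinned down using de Rham's theorem together with the universal coefficient theorem for $H^*(M;\mz)$. One must also check that the dictionary between the hexagon maps $(i,j,c,curv)$ and Pawar's maps $(j,i,n,m)$ is set up so that Corollary \ref{corpinj}'s notion of compatible isomorphism coincides with the one asserted in the proposition. Once the $3\times 3$ array and the map dictionary are correctly in place, nothing remains: existence and uniqueness are a direct application of Corollary \ref{corpinj}, the only additional input being the elementary fact that real vector spaces --- and their quotients --- are injective in $\ab$.
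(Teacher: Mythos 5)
Your proposal is correct and follows the paper's own route: redraw the hexagon as the $3\times 3$ diagram with exact rows and columns (identifying the corners via de Rham and universal coefficients), then apply Corollary \ref{corpinj} using that $P=\frac{H^{k-1}(M;\mr)}{H^{k-1}(M;\mz)}$ is divisible, hence injective in $\ab$. The only difference is that you spell out the unfolding and the map dictionary, which the paper leaves implicit.
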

Note that the proposition \ref{directdc} is weaker than the Simons-Sullivan theorem \ref{ssdcth}. The full Simons-Sullivan theorem is a statement about functors. We therefore consider the category of functors $Fun(\man^{op},\ab)$. This is an abelian category, having enough projectives and enough injectives \cite{mseinjectives,moprojectives}. Therefore by proposition \ref{finaluniqueness}, we have the following 
\begin{proposition}\label{dcconditions}
 The following are equivalent : 
 \begin{enumerate}
  \item The Simons-Sullivan hexagon diagram uniquely determines the functor $\hhat^n(M;\mrz)$ upto a compatible natural equivalence.
  \item The natural transformation $Hom(d\Omega^{n-1}(-) \oplus Ext(H_{n-1}(-;\mz),\mz),\frac{H^n(-;\mr)}{H^n(-;\mz)_\mr}) \xrightarrow{\alpha} Ext(Hom(H_n(-,\mz)),\frac{H^n(-;\mr)}{H^n(-;\mz)_\mr})$ is an epimorphism in $Fun(\man^{op},\ab)$, and every natural transformation from $Hom(H^{k-1}(-;\mathbb{R}/\mathbb{Z})+\frac{\Omega^{k-1}(-)}{\Omega_0^{k-1}(-)})$ to $\frac{H^n(-;\mr)}{H^n(-;\mz)_\mr}$ which vanishes on $\frac{H^n(-;\mr)}{H^n(-;\mz)_\mr}$ extends to a natural transformation on $\hhat^n(-;\mrz)$.
 \end{enumerate}

\end{proposition}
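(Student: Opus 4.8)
The plan is to derive the equivalence as a direct application of Proposition~\ref{finaluniqueness} in the abelian category $Fun(\man^{op},\ab)$, so that essentially no new argument is needed beyond checking that the homological setup of Section~\ref{rpsummary} and the rewriting in the proof of Proposition~\ref{directdc} are available there. The first step is to recall that $Fun(\man^{op},\ab)$ is abelian and has enough projectives (and enough injectives) \cite{moprojectives,mseinjectives}; consequently the Yoneda description of $Ext$, the long exact sequence~\eqref{rpexact} attached to the pullback $Y=F\times_Q G$, and Propositions~\ref{rpthme}, \ref{rpthmu} and \ref{finaluniqueness} all hold verbatim in this category, with exactness, images, subobjects and pullbacks computed objectwise.

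Next I would repeat, now naturally in $M$, the ``redrawing'' used in the proof of Proposition~\ref{directdc}: the Simons--Sullivan hexagon of Definition~\ref{dcdef}, viewed as a diagram in $Fun(\man^{op},\ab)$, becomes an instance of Diagram 1 with short exact rows and columns --- precisely the $3\times 3$ array displayed in that proof --- with a differential character functor $X_1:=\hhat^n(-;\mrz)$ (e.g.\ the Cheeger--Simons functor, which exists) sitting in the central position and $i,j,curv,c$ as its four structure maps; this is the extension of Diagram 1 that Proposition~\ref{finaluniqueness} presupposes. The point to verify here is that every identification entering the array is natural in $M$: the left column is $Hom(H_{n-1}(-;\mz),-)$ applied to $0\to\mz\to\mr\to\mrz\to 0$, the bottom row is the universal coefficient sequence for $H^n(-;\mz)$, the middle row and column are the two Cheeger--Simons short exact sequences, and the remaining entries come from the de Rham descriptions of $\Omega_0^n(-)$ and of the kernel of $d$ acting on $\Omega^{n-1}(-)/\Omega_0^{n-1}(-)$ --- all standard, so the array is an honest short-exact diagram of functors.

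With this in place I would note that in $Fun(\man^{op},\ab)$ a compatible isomorphism of extensions in the sense of Proposition~\ref{finaluniqueness} is exactly a compatible natural equivalence in the sense of Proposition~\ref{ssdcth}: morphisms are natural transformations, isomorphisms are natural equivalences, and compatibility with $(i_1,j_1,m_1,n_1)=(i,j,curv,c)$ is the quadruple of relations $\psi\circ i_1=i_2$, $\psi\circ j_1=j_2$, $curv_2\circ\psi=curv_1$, $c_2\circ\psi=c_1$; so, with $X_1$ the fixed Cheeger--Simons functor, statement~(1) of Proposition~\ref{finaluniqueness} is equivalent to statement~(1) of the present proposition. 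Interpreting $E_1+H_1$ as the subfunctor of $\hhat^n(-;\mrz)$ generated by the images of $i$ and $j$, and $P_1$ as the common image of $P$, condition~(2) of Proposition~\ref{finaluniqueness} for this $X_1$ translates term by term into condition~(2) here: surjectivity of $\alpha$ becomes the asserted epimorphism of functors, and the extension property for homomorphisms $E_1+H_1\to P$ vanishing on $P_1$ becomes the asserted extension property for natural transformations. Proposition~\ref{finaluniqueness} then yields (1)$\Leftrightarrow$(2).

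I expect the main (though modest) obstacle to be the bookkeeping of the second step: confirming that the $3\times 3$ array of functors really has short exact rows and columns, and that the natural transformation denoted $\alpha$ in~(2) is genuinely the connecting homomorphism of~\eqref{rpexact} formed for $Y=F\times_Q G$ \emph{inside} $Fun(\man^{op},\ab)$ rather than objectwise. Since pullbacks in this functor category and the Yoneda $Ext$ long exact sequence are formal consequences of its being abelian with enough projectives, this reduces to the naturality of the universal coefficient theorem and of the de Rham isomorphisms, which is standard; once that is noted, the proof is a transcription of Proposition~\ref{finaluniqueness}.
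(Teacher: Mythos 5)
Your proposal is correct and follows essentially the same route as the paper: the paper likewise obtains Proposition~\ref{dcconditions} by applying Proposition~\ref{finaluniqueness} in the abelian category $Fun(\man^{op},\ab)$ (which has enough projectives and injectives), after redrawing the hexagon as the natural $3\times 3$ diagram from the proof of Proposition~\ref{directdc} with the Cheeger--Simons functor in the central position. Your write-up simply makes explicit the naturality and translation details that the paper leaves implicit.
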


In this proposition, the hom sets are in the $Fun(\man^{op},\ab)$ category. The functor $\frac{H^n(-;\mr)}{H^n(-;\mz)_\mr}$ is considered as a subobject of the functor $H^{k-1}(-;\mathbb{R}/\mathbb{Z})+\frac{\Omega^{k-1}(-)}{\Omega_0^{k-1}(-)}$ which in in turn a subobject of $\hhat^n(-;\mrz)$.

As we have seen in the proof of proposition \ref{directdc} the corresponding question in the category $\ab$ is trivial since $\frac{H^{n-1}(M;\mr)}{H^{n-1}(M;\mz)_\mr}$ is divisible and hence injective. However, it is difficult to see whether or not the functor $\frac{H^{n-1}(-;\mr)}{H^{n-1}(-;\mz)_\mr}$ is an injective object in $Fun(\man^{op},\ab)$.
\subsection{The case of differential K-theory}\label{dk}
Complex topological K-theory too admits a differential refinement called differential K-theory. For a survey of various models of differential K-theory, see \cite{bunkeschick}. In \cite{ssdk}, Simons and Sullivan develop a model of differential K-theory for compact manifolds as the Grothendieck completion of the semigroup of 'structured vector bundles' and show that this group fits into a hexagon diagram :
\begin{proposition}
 The differential K-groups fit into the hexagon diagram 
\begin{center}
\setlength{\unitlength}{0.5cm}
\begin{picture}(24,16)
\put(5,0.5){$0$}
\put(20.5,0.5){$0$}
\put(5.5,1){\vector(1,1){1.7}}
\put(18.5,3){\vector(1,-1){1.7}}
\put(8,4){$\frac{\Omega^{odd}(M)}{\Omega_{GL}(M)}$}
\put(12,4){\vector(1,0){3.2}}
\put(17,4){$\Omega_{BGL}(M)$}
\put(13.5,4.2){\small{$d$}}
\put(2.5,6){\vector(1,1){1.7}}
\put(6.5,7){\vector(1,-1){1.7}}
\put(7.2,6.6){\small{$\zeta$}}
\put(10.5,6.5){\small{$i$}}
\put(10.3,5.7){\vector(1,1){1.7}}
\put(14.5,7){\vector(1,-1){1.7}}
\put(15.0,7){\small{$curv$}}
\put(18.5,7){\small{$s$}}
\put(18.5,6){\vector(1,1){1.7}}
\put(22.5,7){\vector(1,-1){1.7}}
\put(4,8){$H^{odd}(M;\mc)$}
\put(12,8){$\hat{K}(M)$}
\put(20,8){$H^{even}(M;\mc)$}
\put(2.5,11){\vector(1,-1){1.7}}
\put(6.5,10.5){\small{$\chi$}}
\put(6.5,9.5){\vector(1,1){1.7}}
\put(10,11){\vector(1,-1){1.7}}
\put(11,10.5){\small{$j$}}
\put(14.2,10.2){\small{$\delta$}}
\put(14.0,9.5){\vector(1,1){1.7}}
\put(18.5,11){\vector(1,-1){1.7}}
\put(19.5,10.5){\small{$r$}}
\put(22,9.5){\vector(1,1){1.7}}
\put(7,12){$K(\mc/\mz)(M)$}
\put(12.5,12){\vector(1,0){3}}
\put(16,12){$K(M)$}
\put(13.2,12.2){\small{$-B$}}
\put(5.5,15){\vector(1,-1){1.7}}
\put(18,13){\vector(1,1){1.7}}
\put(5,15){$0$}
\put(20,15){$0$}
\end{picture}
Differential K-theory hexagon diagram
\end{center}Here $\chi$ is reduction mod $\mz$, and $\zeta$ is induced by the de Rham map.
\end{proposition}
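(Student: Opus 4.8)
The plan is to prove the statement by carrying out the geometric construction of \cite{ssdk}; note first that the homological results of Section~\ref{rpsummary} are of no help here, since Pawar's theorems only produce \emph{some} abelian group fitting into a prescribed diagram, whereas $\hat{K}(M)$ is here a fixed object assembled from vector bundles and one must verify that this particular object fits. I would begin by recalling the relevant definitions. A \emph{structured vector bundle} over $M$ is a pair $\hat{E}=(E,[\nabla])$, where $E\to M$ is a complex vector bundle and $[\nabla]$ is the equivalence class of a connection under the relation $\nabla\sim\nabla'$ iff the normalized Chern--Simons transgression form $\mathrm{CS}(\nabla,\nabla')$ is exact. Whitney sum makes isomorphism classes of structured bundles into an abelian monoid, and one checks — using that on a compact manifold every bundle is a direct summand of a trivial one, together with a Chern--Simons realization lemma — that this monoid has the cancellation property, so that no information is lost on passing to the Grothendieck group $\hat{K}(M)$; functoriality in $M$ is pullback of bundles and connections.

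Second, I would construct the four maps adjacent to the central object. The curvature map $curv\colon\hat{K}(M)\to\Omega_{BGL}(M)$ sends $[\hat{E}]-[\hat{F}]$ to $\mathrm{ch}(\nabla_E)-\mathrm{ch}(\nabla_F)$; this is well defined because $d\,\mathrm{CS}(\nabla,\nabla')=\mathrm{ch}(\nabla')-\mathrm{ch}(\nabla)$, so Chern--Simons-equivalent connections produce literally the same Chern character form. The map $\delta\colon\hat{K}(M)\to K(M)$ forgets the connection class. The map $i\colon\Omega^{odd}(M)/\Omega_{GL}(M)\to\hat{K}(M)$ sends $\omega$ to $[(\underline{\mathbb{C}}^{\,N},\nabla_\omega)]-[(\underline{\mathbb{C}}^{\,N},\nabla_{\mathrm{triv}})]$ for $N$ large enough that there is a connection $\nabla_\omega$ on the trivial bundle with $\mathrm{CS}(\nabla_{\mathrm{triv}},\nabla_\omega)$ cohomologous to $\omega$; the existence of such $\nabla_\omega$ and the independence of the construction of all choices modulo $\Omega_{GL}(M)$ is the technical core of this step. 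Finally $j\colon K(\mathbb{C}/\mathbb{Z})(M)\to\hat{K}(M)$ is induced by including flat structured bundles, once a geometric model of $\mathbb{C}/\mathbb{Z}$-K-theory (e.g. Lott's) has been fixed.

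Third, I would verify that the hexagon commutes and has short exact diagonals. Commutativity of the squares meeting $H^{odd}(M;\mathbb{C})$ and $H^{even}(M;\mathbb{C})$ is a routine consequence of Chern--Weil theory, the de Rham isomorphism, and the definition of the K-theoretic Bockstein for $0\to\mathbb{Z}\to\mathbb{C}\to\mathbb{C}/\mathbb{Z}\to 0$. For the diagonal $0\to\Omega^{odd}(M)/\Omega_{GL}(M)\xrightarrow{i}\hat{K}(M)\xrightarrow{\delta}K(M)\to 0$: surjectivity of $\delta$ uses that every bundle admits a connection; $\delta\circ i=0$ is immediate; exactness in the middle follows because, after stabilization, any class in $\ker\delta$ has the form $[(E,\nabla_1)]-[(E,\nabla_2)]=i(\mathrm{CS}(\nabla_2,\nabla_1))$; and injectivity of $i$ is exactly the assertion that $\mathrm{CS}(\nabla_{\mathrm{triv}},\nabla_\omega)\in\Omega_{GL}(M)$ precisely when the corresponding structured bundles are stably isomorphic. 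For the diagonal $0\to K(\mathbb{C}/\mathbb{Z})(M)\xrightarrow{j}\hat{K}(M)\xrightarrow{curv}\Omega_{BGL}(M)\to 0$: surjectivity of $curv$ is the even realization lemma (indeed $\Omega_{BGL}(M)$ consists by definition of the forms realized as Chern characters of connections), $curv\circ j=0$ holds since $\mathbb{C}/\mathbb{Z}$-classes are flat, and exactness in the middle together with injectivity of $j$ amounts to identifying $\ker(curv)$ with $\mathbb{C}/\mathbb{Z}$-K-theory. I expect this last identification to be the main obstacle: it is where one genuinely needs the chosen model of $K(\mathbb{C}/\mathbb{Z})(M)$ and the arguments of \cite{ssdk}, the remaining verifications being essentially formal once the two realization lemmas are in hand.
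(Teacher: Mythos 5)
The paper does not actually prove this proposition: it is recalled from Simons and Sullivan, and the construction of $\hat{K}(M)$, of the maps $i,j,\delta,curv$, and the verification of commutativity and exactness of the diagonals are all deferred to \cite{ssdk} (``For a fuller description of the terms and maps in this diagram, see \cite{ssdk}''). Your outline follows exactly the Simons--Sullivan route, and your opening observation is correct and worth stating: Pawar's results only manufacture \emph{some} object filling a diagram, so they cannot show that the concrete Grothendieck group of structured bundles fits the hexagon, which is precisely why the paper imports this statement rather than deriving it. As a standalone proof, though, your text has the same status as the paper's citation: the two realization lemmas and, above all, the identification of $\ker(curv)$ with $K(\mc/\mz)(M)$ --- which you rightly single out as the main obstacle --- are exactly the nontrivial content of \cite{ssdk}, and you name them rather than prove them. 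One small inaccuracy: cancellation in the monoid of structured bundles is neither clearly available nor what is needed; what Simons and Sullivan establish is that every structured bundle admits a complement making the sum a trivial structured bundle, so that every class in $\hat{K}(M)$ is a difference $[V]-[n]$ and equality is stable isomorphism. Since the proposition concerns only the Grothendieck group $\hat{K}(M)$ itself, the claim that ``no information is lost'' in passing to it plays no role and should be dropped or justified differently.
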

For a fuller description of the terms and maps in this diagram, see \cite{ssdk}. Throughout this subsection, i.e. in the context of differential $K-$theory, $M$ is assumed to be a compact manifold.
Simons and Sullivan ask whether the diagram determines the groups $\hat{K}(M)$ upto isomorphism compatible with the other maps in the diagram. The following proposition provides a partial answer to their question.
\begin{proposition}\label{directdk}
If $\hat{K}'(M)$ is any other abelian group together with maps $i',j',\delta',curv'$ which makes the above hexagon diagram commute, and have short exact diagonals, then there exists an isomorphism $\phi : \hat{K}(M) \to \hat{K}'(M)$ such that $\phi \circ i =i', \phi \circ j = j', \delta' \circ \phi =\delta$, and $curv' \circ \phi =curv$. 
\end{proposition}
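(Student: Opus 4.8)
The plan is to run the argument of Proposition \ref{directdc} with complex $K$-theory in place of ordinary cohomology: unfold the differential $K$-theory hexagon into an instance of Diagram 1 together with an extension (Diagram 2), check that the corner object is injective in $\ab$, and then conclude by Corollary \ref{corpinj}.

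First I would unfold the hexagon, exactly as the hexagon is unfolded in the proof of Proposition \ref{directdc}, into a commutative $3\times 3$ array with short exact rows and columns:
\begin{center}
\begin{tikzcd}[column sep=small]
 & 0 \arrow[d] & 0 \arrow[d] & 0 \arrow[d] & \\
0 \arrow[r] & \frac{H^{odd}(M;\mc)}{ch\,K^{-1}(M)} \arrow[r] \arrow[d] & \frac{\Omega^{odd}(M)}{\Omega_{GL}(M)} \arrow[r,"d"] \arrow[d,"i"] & d\Omega^{odd}(M) \arrow[r] \arrow[d] & 0 \\
0 \arrow[r] & K(\mc/\mz)(M) \arrow[r,"j"] \arrow[d] & \hat K(M) \arrow[r,"curv"] \arrow[d,"\delta"] & \Omega_{BGL}(M) \arrow[r] \arrow[d] & 0 \\
0 \arrow[r] & K(M)_{tors} \arrow[r] \arrow[d] & K(M) \arrow[r] \arrow[d] & \frac{K(M)}{K(M)_{tors}} \arrow[r] \arrow[d] & 0 \\
 & 0 & 0 & 0 &
\end{tikzcd}
\end{center}
Here $ch\,K^{-1}(M)$ is the image of $K^{-1}(M)$ under the Chern character inside the $\mc$-vector space $H^{odd}(M;\mc)\cong K^{-1}(M)\otimes\mc$, the group $K(M)_{tors}$ is the torsion subgroup of $K(M)=K^{0}(M)$, and $d\Omega^{odd}(M)$ is the group of exact even forms. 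The middle row and the middle column are the two exact diagonals of the hexagon; the first column is the relevant segment $0\to\frac{H^{odd}(M;\mc)}{ch\,K^{-1}(M)}\to K(\mc/\mz)(M)\to K(M)_{tors}\to 0$ of the long exact coefficient sequence for $0\to\mz\to\mc\to\mc/\mz\to 0$ in $K$-theory; and the remaining rows and columns come, in the same fashion, from the de Rham sequence along the bottom of the hexagon and the coefficient sequence along its top. One then checks that under this rewriting the hexagon maps $i,j,\delta,curv$ are, respectively, the maps $j,i,n,m$ of Diagram 2 in the notation of Proposition \ref{finaluniqueness}, so that the notion of ``compatible isomorphism'' of that proposition coincides with the one in the present statement.

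Second I would apply Corollary \ref{corpinj}. The corner object is $P=\frac{H^{odd}(M;\mc)}{ch\,K^{-1}(M)}$, a quotient of the $\mc$-vector space $H^{odd}(M;\mc)$; it is therefore divisible, hence an injective object of $\ab$. By Corollary \ref{corpinj} the resulting Diagram 1 admits an extension, and any two of its extensions are related by a compatible isomorphism. The Simons-Sullivan group $\hat K(M)$, with its maps $i,j,\delta,curv$, and any other group $\hat K'(M)$ with maps $i',j',\delta',curv'$ making the hexagon commute with short exact diagonals are both extensions of this one Diagram 1 (the outer terms and outer maps of the hexagon being common to both). Hence Corollary \ref{corpinj} produces an isomorphism $\phi\colon\hat K(M)\to\hat K'(M)$ with $\phi\circ i=i'$, $\phi\circ j=j'$, $\delta'\circ\phi=\delta$, and $curv'\circ\phi=curv$, as required.

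The main obstacle is entirely in the first step, which is bookkeeping rather than new mathematics: one must verify that the differential $K$-theory hexagon really does unfold into a $3\times 3$ diagram with \emph{short exact} rows and columns having the corners displayed above (in particular that $\ker(d)$ on $\tfrac{\Omega^{odd}(M)}{\Omega_{GL}(M)}$ and the image of $H^{odd}(M;\mc)$ in $K(\mc/\mz)(M)$ are both $\frac{H^{odd}(M;\mc)}{ch\,K^{-1}(M)}$, that $curv$ is onto $\Omega_{BGL}(M)$ with kernel the image of $j$, and that the kernel of the de Rham map $K(M)\to H^{even}(M;\mc)$ is exactly $K(M)_{tors}$), and then match the connecting maps with $i,j,\delta,curv$. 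This is the verbatim $K$-theoretic translation of the computation already carried out for ordinary differential cohomology in the proof of Proposition \ref{directdc}; compactness of $M$ enters only through the ambient Simons-Sullivan setup (existence of $\hat K(M)$ and the Chern character identifications) and not through the homological-algebra core, which, just as for differential characters, trivialises because the corner $P$ is divisible.
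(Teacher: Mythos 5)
Your proposal is correct and follows essentially the same route as the paper: unfold the hexagon into the $3\times 3$ diagram with short exact rows and columns, observe that the corner $P$ (your $\frac{H^{odd}(M;\mc)}{ch\,K^{-1}(M)}$, written in the paper as $\frac{H^{odd}(M;\mc)}{\ker(\chi)=\ker(\zeta)}$, and your $K(M)_{tors}$, $K(M)/K(M)_{tors}$ being the paper's $im(B)$, $im(r)=im(s)$) is divisible hence injective in $\ab$, and conclude by Corollary \ref{corpinj}. The only difference is that you name the corner objects more explicitly via the Chern character, which is harmless.
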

\begin{proof}
The diagram can be redrawn as 
\begin{center}
\begin{tikzcd}
            & 0 \arrow[d]                                                             & 0 \arrow[d]                                                             & 0 \arrow[d]                              &   \\
0 \arrow[r] & \frac{H^{odd}(M;\mathbb{C})}{ker(\chi)=ker(\zeta)} \arrow[d] \arrow[r] & \frac{\Omega^{odd}(M)}{\Omega_{GL}(M)} \arrow[d, "i"] \arrow[r] & ker(s) \arrow[d] \arrow[r]               & 0 \\
0 \arrow[r] & K(\mathbb{C}/\mathbb{Z}) \arrow[d] \arrow[r, "j"]                       & \hat{K}(M) \arrow[d, "\delta"] \arrow[r, "curv"]                   & \Omega_{BGL}(M) \arrow[d] \arrow[r] & 0 \\
0 \arrow[r] & im(B) \arrow[d] \arrow[r]                                               & K(M) \arrow[d] \arrow[r]                                           & im(r)=im(s) \arrow[d] \arrow[r]          & 0 \\
            & 0                                                                       & 0                                                                       & 0                                        &  
\end{tikzcd}
\end{center}
The proposition follows from corollary \ref{corpinj} by noting that $P=\frac{H^{odd}(M;\mc)}{ker(\chi)=ker(\zeta)}$ is divisible and hence an injective object in $\ab$.
\end{proof}
This is weaker than the claim of uniqueness of the functor $\hat{K}$. Consider the functor category $Fun(\man^{op}_{cpt},\ab)$. From \ref{finaluniqueness}, we note the following 
\begin{proposition}\label{dkconditions}
 The following are equivalent : 
 \begin{enumerate}
  \item If $\hat{K}'(-)$ is another functor from $\man^{op}_{cpt}$ to $\ab$ together with natural transformations $i',j',\delta',curv'$ which fit in the hexagon diagram with exact diagonals, then there is a natural equivalence $\phi : \hat{K}(-) \to \hat{K}'(-)$ such that $\phi \circ i=i', \phi \circ j=j', \delta' \circ \phi =\delta$, and $curv' \circ \phi =curv$.
  \item The natural transformation $Hom(R \oplus S, P) \xrightarrow{\alpha} Ext^1(Q,P)$ is an epimorphism in $Fun(\man^{op}_{cpt},\ab)$, and every natural transformation from $E+H$ to $P$ which vanishes on $P$ extends to a natural transformation on $X$ 
  where $P=\frac{H^{odd}(-;\mc)}{ker(\chi)=ker(\zeta)}, E= \frac{\Omega^{odd}(-)}{\Omega_{GL}(-)}, H=K(\mathbb{C}/\mz)(-), R = ker(s), S= im(B)$, and $X=\hat{K}(-) \in Fun(\man^{op}_{cpt},\ab)$.
 \end{enumerate}
\end{proposition}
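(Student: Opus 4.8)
The plan is to obtain the equivalence as an immediate application of Proposition~\ref{finaluniqueness}, taken in the abelian category $Fun(\man^{op}_{cpt},\ab)$ rather than in a fixed $\mcA$. First I would record the categorical input: $Fun(\man^{op}_{cpt},\ab)$ is abelian and has enough projectives, by the same reasoning recalled for $Fun(\man^{op},\ab)$ in the discussion after Proposition~\ref{directdc}, so all of Rakesh Pawar's homological results, and in particular Proposition~\ref{finaluniqueness}, are available there verbatim. Next I would unfold the hexagon into an instance of Diagram~1 \emph{inside this functor category}, by the same bookkeeping used objectwise in the proof of Proposition~\ref{directdk}: put $P=\frac{H^{odd}(-;\mc)}{ker(\chi)=ker(\zeta)}$, $E=\frac{\Omega^{odd}(-)}{\Omega_{GL}(-)}$, $R=ker(s)$, $H=K(\mc/\mz)(-)$, $F=\Omega_{BGL}(-)$, $S=im(B)$, $G=K(-)$, $Q=im(r)=im(s)$. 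Each of these objects, and every structure map of the resulting Diagram~1, is built from the fixed data of the hexagon (the de Rham maps $\zeta$ and $s$, the reduction $\chi$, the Bockstein $B$, and $r$) by forming kernels, images and cokernels of natural transformations; hence they are genuine functors and natural transformations, and the rows and columns of the unfolded square are short exact in $Fun(\man^{op}_{cpt},\ab)$ because exactness in a functor category valued in $\ab$ is detected objectwise, where it is exactly Proposition~\ref{directdk}.

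The second step is the translation dictionary. A functor $\hat{K}'(-)$ equipped with natural transformations $i',j',\delta',curv'$ that fit the hexagon with short exact diagonals gives, under the same unfolding, an extension $(X_2,i_2,j_2,m_2,n_2)$ of the Diagram~1 above, with $X_2=\hat{K}'(-)$ and, in the naming convention of Diagram~2, $i_2=j'$, $j_2=i'$, $m_2=curv'$, $n_2=\delta'$; and conversely any such extension reassembles into a hexagon datum, the outer diagonal maps being recovered from Diagram~1 through the canonical projections and inclusions. Under this dictionary a natural equivalence $\phi$ with $\phi\circ i=i'$, $\phi\circ j=j'$, $\delta'\circ\phi=\delta$, $curv'\circ\phi=curv$ is precisely a \emph{compatible isomorphism} in the sense introduced just before Proposition~\ref{finaluniqueness}; and taking $X_1=\hat{K}(-)$, the subobject $E_1+H_1\subset X_1$ there is the subfunctor $E+H\subset\hat{K}(-)$ of the present statement, with $P_1\subset E_1+H_1$ the common image of $P=\frac{H^{odd}(-;\mc)}{ker(\chi)}$.

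With the stage set, the proof closes by invoking Proposition~\ref{finaluniqueness} with $X_1=\hat{K}(-)$. Its clause~(1) is, word for word, clause~(1) of the present proposition. Its clause~(2) --- surjectivity of the map $\alpha$ in the long exact sequence \eqref{rpexact}, now the $Hom$-sequence computed in $Fun(\man^{op}_{cpt},\ab)$, together with the extension property for homomorphisms $E_1+H_1\to P$ that vanish on $P_1$ --- becomes, after substituting the objects of the unfolding and reading ``homomorphism'' as ``natural transformation'', clause~(2) here (the objects being named in its final ``where'' phrase). Hence $(1)\Leftrightarrow(2)$.

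I expect the only delicate points to be those flagged in the first step, not the invocation of Proposition~\ref{finaluniqueness} itself. One must make sure $Fun(\man^{op}_{cpt},\ab)$ really does have enough projectives, so that Pawar's $Ext$-theoretic apparatus, including the exact sequence \eqref{rpexact}, transfers without change. More substantively, one must check that the unfolding is \emph{natural}: in particular that the identifications $ker(\chi)=ker(\zeta)$ and $im(r)=im(s)$ used to name the outer objects hold as identifications of \emph{subfunctors} --- they do, being objectwise consequences of exactness of the hexagon --- so that the Diagram~1 attached to $\hat{K}(-)$ and the one attached to any rival $\hat{K}'(-)$ are literally the same diagram and only the central object and its four maps vary. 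Finally, it is worth noting why the result is only a conditional equivalence: unlike the fixed-manifold statement of Proposition~\ref{directdk}, where $P$ is divisible hence injective in $\ab$, one cannot here bypass clause~(2) by appealing to injectivity of $P$ in the functor category, and whether $\frac{H^{odd}(-;\mc)}{ker(\chi)}$ is an injective object of $Fun(\man^{op}_{cpt},\ab)$ is exactly the sort of question that is left open.
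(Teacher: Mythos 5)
Your proposal is correct and follows essentially the same route as the paper: the paper also obtains this proposition by unfolding the differential K-theory hexagon into Diagram~1 in $Fun(\man^{op}_{cpt},\ab)$ (exactly as in the proof of Proposition~\ref{directdk}, now read objectwise) and then invoking Proposition~\ref{finaluniqueness} in that functor category, with the same dictionary between the hexagon maps $(i',j',\delta',curv')$ and the extension data $(j_2,i_2,n_2,m_2)$. Your added care about enough projectives in the functor category and the subfunctor identifications $ker(\chi)=ker(\zeta)$, $im(r)=im(s)$ only makes explicit what the paper leaves implicit.
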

Like in proposition \ref{dcconditions}, we consider $P$ as a subobject of $E+H$, which is in turn a subobject of $X$.\\
A stronger and more general result showing the uniqueness of the differential version of exotic cohomology theories with integration (and hence differential K-theory functor, in particular) has been proved in \cite{bunke} by a different method.
\section*{Acknowledgments}
I wish to thank my supervisor Dr. Rishikesh Vaidya for discussions and support. I am thankful to Jitendra Rathore for valuable discussions. I am grateful to the Council of Scientific \& Industrial Research for financial support under the CSIR-SRF(NET) scheme.
\bibliography{refs}
Ishan Mata, \textsc{Department of Physics, Birla Institute of Technology and Science - Pilani, Pilani campus, Pilani, Rajasthan, India. PIN : 333031}\\
\textit{Email} : ishanmata@gmail.com 
\end{document}